\documentclass{birkjour}
%
%
% THEOREM Environments (Examples)-----------------------------------------
\usepackage{amsmath,amssymb}

\usepackage{graphicx}

\usepackage[colorlinks=true,citecolor=black,linkcolor=black,urlcolor=blue]{hyperref}
\newtheorem{thm}{Theorem}[section]
\newtheorem{cor}[thm]{Corollary}
\newtheorem{lem}[thm]{Lemma}
\newtheorem{prop}[thm]{Proposition}

\theoremstyle{definition}

\newtheorem{rem}[thm]{Remark}

\numberwithin{equation}{section}

\usepackage{pgf,tikz}
\usetikzlibrary{arrows}

%-------------------------------------------------------------------------
% editorial commands: to be inserted by the editorial office
%
%\firstpage{1} \volume{228} \Copyrightyear{2004} \DOI{003-0001}
%
%
%\seriesextra{Just an add-on}
%\seriesextraline{This is the Concrete Title of this Book\br H.E. R and S.T.C. W, Eds.}
%
% for journals:
%
%\firstpage{1}
%\issuenumber{1}
%\Volumeandyear{1 (2004)}
%\Copyrightyear{2004}
%\DOI{003-xxxx-y}
%\Signet
%\commby{inhouse}
%\submitted{March 14, 2003}
%\received{March 16, 2000}
%\revised{June 1, 2000}
%\accepted{July 22, 2000}
%
%
%
%---------------------------------------------------------------------------
%Insert here the title, affiliations and abstract:
%
\begin{document}

\title[The automorphism group of the bipartite Kneser graph]
 {The automorphism group of the \\bipartite Kneser graph}

%----------Author 1
\author[S. Morteza Mirafzal]{S. Morteza Mirafzal}

\address{Department of Mathematics\\ Lorestan University\\ Khoramabad\\ Iran}

\email{smortezamirafzal@yahoo.com}
\email{mirafzal.m@lu.ac.ir}

\thanks{}
%----------Author 2
%----------classification, keywords, date
\subjclass{Primary 05C25  Secondary 94C15}

\keywords{bipartite Kneser graph, vertex-transitive graph,   automorphism
group.}

\date{}
%----------additions
%%%
\begin{abstract}Let $n$ and $k$ be integers with  $n>2k,  k\geq1$. We denote by $H(n, k)$ the $bipartite\  Kneser\
graph$, that   is,  a graph with the family of $k$-subsets and   ($n-k$)-subsets   of $[n]  = \{1, 2, ... , n\}$ as vertices, in which
 any two vertices are adjacent if and only if    one of them is a subset of the other.
In this paper, we   determine the automorphism group of $H(n, k)$. We show   that $Aut(H(n, k))\cong Sym([n]) \times \mathbb{Z}_2$ where $\mathbb{Z}_2$ is the cyclic group of order $2$. Then, as an application of the obtained result,  we  give a new proof for determining the automorphism group  of the Kneser graph $K(n,k)$.    In fact we show how to determine the automorphism group of the Kneser graph $K(n,k)$ given the automorphism group of the Johnson graph  $J(n,k)$. Note that the known proofs for determining the automorphism groups of Johnson graph $J(n,k)$ and Kneser graph $ K(n,k)$ are independent from each other.
\end{abstract}
\maketitle
\section{ Introduction}
\noindent
For a positive integer $n >  1 $,  let $[n]  = \{1, 2, ... , n\} $   and $V$ be the set of all $k$-subsets
and $(n-k)$-subsets of $[n]$.
 The $bipartite\  Kneser\  graph$ $H(n, k)$ has
$V$ as its vertex set, and two vertices $A,  B$ are adjacent if and only if $A \subset B$ or $B\subset A$. If $n = 2k$ it is obvious that
we do not have any edges, and in such a case, $H(n, k)$ is a null graph, and  hence we assume that  $n \geq 2k + 1$.
It follows from the definition of the graph $H(n, k)$ that it has
 2${n}\choose{k}$  vertices and the degree of each of its vertices  is
 ${n-k}\choose{k}$= ${n-k}\choose{n-2k}$, hence it is a regular graph. It is clear that $H(n, k)$ is a bipartite graph.
In fact,
if  $V_1=\{ v\in V(H(n ,k))   |  \  |v| =k \}$ and $V_2=\{ v\in V(H(n ,k))  | \  |v| =n-k \}$, then $\{ V_1, V_2\}$
is a partition of $V(H(n ,k))$ and every edge of $H(n, k)$ has a vertex in $V_1$ and a  vertex in $V_2$ and
$| V_1 |=| V_2 |$.  It is an easy task to show that   the graph $H(n, k)$  is a connected graph. The bipartite Kneser graph $H(2n+1, n)$ is known as the $middle\  cube$ $MQ_{2n+1} = {Q_{2n+1}}(n,n+1)$  [3]  or $regular\  hyperstar$ graph $HS(2(n+1),n+1)$ [11,13].

The regular hyperstar graph $ {Q_{2n+1}}(n,n+1) $  has been investigated from various aspects,  by various authors and some of the recent works about this class of graphs are [3,6,11,13,16,17].  The following figure shows the graph  $H(5,2)$ ($ Q_{5}(2,3) $) in plane. Note that in this figure the set $\{i,j,k\} $ ($\{ i,j \}$) is denoted by $ijk$ ($ij$). \

\definecolor{qqqqff}{rgb}{0.,0.,1.}
\begin{tikzpicture}[line cap=round,line join=round,>=triangle 45,x=.65cm,y=.80cm]
\clip(-4.3,-2.38) rectangle (11.32,6.3);
\draw (-0.9,3.74) node[anchor=north west] {13};
\draw (0.9,5.6) node[anchor=north west] {123};
\draw (4.88,5.3) node[anchor=north west] {124};
\draw (5.7,3.78) node[anchor=north west] {24};
\draw (5.7,1.9) node[anchor=north west] {245};
\draw (5.0,0.66) node[anchor=north west] {45};
\draw (3.0,0.04) node[anchor=north west] {453};
\draw (1.14,0.52) node[anchor=north west] {43};
\draw (-0.8,1.68) node[anchor=north west] {143};
\draw (3.28,5.9) node[anchor=north west] {12};
\draw (4.92,4.76)-- (3.06,5.52);
\draw (3.06,5.52)-- (1.14,4.88);
\draw (1.14,4.88)-- (0.12,3.52);
\draw (0.12,3.52)-- (0.22,1.64);
\draw (0.22,1.64)-- (1.24,0.64);
\draw (1.24,0.64)-- (3.44,0.2);
\draw (3.44,0.2)-- (4.88,0.64);
\draw (5.72,3.38)-- (5.6,3.34);
\draw (4.92,4.76)-- (5.6,3.34);
\draw (5.72,1.68)-- (5.66,1.82);
\draw (5.66,1.82)-- (5.6,3.34);
\draw (5.66,1.82)-- (4.88,0.64);
\draw (3.96,3.8)-- (3.24,4.48);
\draw (3.96,3.8)-- (4.02,1.8);
\draw (4.02,1.8)-- (3.44,1.28);
\draw (1.64,3.54)-- (1.66,1.82);
\draw (4.86,3.38)-- (4.8,1.88);
\draw (0.7,3.82) node[anchor=north west] {23};
\draw (0.6,2.12) node[anchor=north west] {234};
\draw (4.68,3.86) node[anchor=north west] {14};
\draw (4.3,1.8) node[anchor=north west] {145};
\draw (3.32,5.) node[anchor=north west] {125};
\draw (3.5,4.48) node[anchor=north west] {25};
\draw (3.52,2.44) node[anchor=north west] {235};
\draw (2.84,1.18) node[anchor=north west] {35};
\draw (1.96,1.4) node[anchor=north west] {135};
\draw (2.0,4.2) node[anchor=north west] {15};
\draw (4.86,3.38)-- (0.22,1.64);
\draw (5.6,3.34)-- (1.66,1.82);
\draw (2.62,3.54)-- (2.58,3.54);
\draw (2.58,3.54)-- (3.24,4.48);
\draw (2.7,1.66)-- (2.68,1.46);
\draw (2.68,1.46)-- (2.58,3.54);
\draw (3.44,1.28)-- (2.68,1.46);
\draw (1.64,3.54)-- (4.02,1.8);
\draw (1.66,1.82)-- (1.24,0.64);
\draw (2.68,1.46)-- (0.12,3.52);
\draw (1.64,3.54)-- (1.14,4.88);
\draw (2.58,3.54)-- (4.8,1.88);
\draw (3.24,4.48)-- (3.06,5.52);
\draw (4.92,4.76)-- (4.86,3.38);
\draw (3.96,3.8)-- (5.66,1.82);
\draw (4.8,1.88)-- (4.88,0.64);
\draw (3.44,1.28)-- (3.44,0.2);
\draw (-1.9,-0.48) node[anchor=north west] {Fig 1. The bipartite Kneser graph  H(5,2)};
\begin{scriptsize}
\draw [fill=qqqqff] (0.12,3.52) circle (1.5pt);
\draw [fill=qqqqff] (0.22,1.64) circle (1.5pt);
\draw [fill=qqqqff] (1.24,0.64) circle (1.5pt);
\draw [fill=qqqqff] (3.44,0.2) circle (1.5pt);
\draw [fill=qqqqff] (4.88,0.64) circle (1.5pt);
\draw [fill=qqqqff] (1.14,4.88) circle (1.5pt);
\draw [fill=qqqqff] (3.06,5.52) circle (1.5pt);
\draw [fill=qqqqff] (4.92,4.76) circle (1.5pt);
\draw [fill=qqqqff] (5.6,3.34) circle (1.5pt);
\draw [fill=qqqqff] (5.66,1.82) circle (1.5pt);
\draw [fill=qqqqff] (1.64,3.54) circle (1.5pt);
\draw [fill=qqqqff] (1.66,1.82) circle (1.5pt);
\draw [fill=qqqqff] (4.86,3.38) circle (1.5pt);
\draw [fill=qqqqff] (4.8,1.88) circle (1.5pt);
\draw [fill=qqqqff] (3.24,4.48) circle (1.5pt);
\draw [fill=qqqqff] (3.44,1.28) circle (1.5pt);
\draw [fill=qqqqff] (3.96,3.8) circle (1.5pt);
\draw [fill=qqqqff] (4.02,1.8) circle (1.5pt);
\draw [fill=qqqqff] (2.58,3.54) circle (1.5pt);
\draw [fill=qqqqff] (2.68,1.46) circle (1.5pt);
\end{scriptsize}
\end{tikzpicture}\

 It was conjectured by Dejter, Erd\H{o}s, and Havel [6] among others, that the middle  cube ${Q_{2n+1}}(n,n+1)$
is Hamiltonian.
 Recently, M\"utze and Su [17]  showed that the bipartite Kneser graph $H(n, k)$ has a Hamilton cycle for all values of $k$. Among various interesting properties of the bipartite Kneser graph $H(n, k)$, we are  interested in its automorphism group and we want to know how this group acts on the  vertex set of $H(n, k)$. Mirafzal [13] determined the automorphism group of $ HS(2n,n)= H(2n-1, n-1)$  and  showed that $HS(2n,n)$ is a vertex-transitive non-Cayley graph. Also, he showed that $ HS(2n,n)$ is arc-transitive. \newline
 Some of the symmetry properties of the bipartite Kneser graph $H(n,k)$, are as follows.

\begin{prop}$[16, \  Lemma \ 3.1]$ The graph $H(n, k)$ is a vertex-transitive graph.

\end{prop}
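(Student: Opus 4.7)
The plan is to exhibit an explicit subgroup of $\mathrm{Aut}(H(n,k))$ that acts transitively on the vertex set. Two sources of automorphisms present themselves naturally: permutations of the ground set $[n]$, and the set-complementation map $A \mapsto [n] \setminus A$. Between them they should suffice.

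First I would check that the natural action of $\mathrm{Sym}([n])$ on subsets of $[n]$, given by $\sigma \cdot A = \{\sigma(a) : a \in A\}$, restricts to an action on $V(H(n,k))$: a permutation sends a $k$-subset to a $k$-subset and an $(n-k)$-subset to an $(n-k)$-subset. Since $A \subseteq B$ if and only if $\sigma(A) \subseteq \sigma(B)$, the action preserves adjacency, so it embeds $\mathrm{Sym}([n])$ into $\mathrm{Aut}(H(n,k))$. A standard fact is that this action is transitive on the $k$-subsets and transitive on the $(n-k)$-subsets; hence $\mathrm{Sym}([n])$ has (at most) two orbits on $V(H(n,k))$, namely $V_1$ and $V_2$.

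Second, I would define $\alpha : V(H(n,k)) \to V(H(n,k))$ by $\alpha(A) = [n] \setminus A$. This is a bijection swapping $V_1$ and $V_2$, and the equivalence $A \subseteq B \Leftrightarrow [n] \setminus B \subseteq [n] \setminus A$ shows that $\alpha$ preserves adjacency, so $\alpha \in \mathrm{Aut}(H(n,k))$. Since $\alpha$ interchanges the two orbits of $\mathrm{Sym}([n])$, the subgroup $\langle \mathrm{Sym}([n]), \alpha \rangle$ of $\mathrm{Aut}(H(n,k))$ acts transitively on $V(H(n,k))$, which proves the proposition.

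There is no real obstacle here; the only thing to be careful about is the verification that $\alpha$ is adjacency-preserving (which amounts to the elementary fact that complementation reverses inclusion), and observing that $n > 2k$ is what guarantees $V_1$ and $V_2$ are genuinely different orbits that need to be bridged by $\alpha$.
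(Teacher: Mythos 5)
Your proof is correct and uses exactly the machinery the paper itself sets up in item (3) of the introduction (the subgroup $H=\{f_\theta \mid \theta\in Sym([n])\}$ together with the complementation map $\alpha$, giving $\langle H,\alpha\rangle\leq Aut(H(n,k))$ with $H$ transitive on each of $V_1,V_2$ and $\alpha$ swapping them); the paper only cites [16] for this proposition, but that is the standard argument. No gaps.
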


\begin{prop}$[16, \ Theorem \  3.2] $ The graph $H(n, k)$ is a symmetric  (or arc-transitive) graph.
\end{prop}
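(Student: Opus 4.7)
\medskip

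\noindent\textbf{Proof proposal.} The plan is to invoke the standard reduction: a graph is arc-transitive if and only if it is vertex-transitive and, for some (equivalently any) vertex $v$, the stabilizer of $v$ in the automorphism group acts transitively on the neighborhood $N(v)$. Since Proposition 1.1 already supplies vertex-transitivity, the only thing I need to produce is local transitivity at one well-chosen vertex.

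First I would exhibit a natural subgroup of $\mathrm{Aut}(H(n,k))$ coming from the action on the ground set. For each $\sigma \in \mathrm{Sym}([n])$, the map $\tilde{\sigma} : V \to V$ defined by $\tilde{\sigma}(X) = \{\sigma(x) : x \in X\}$ preserves the cardinality of a subset and preserves set inclusion, hence preserves adjacency in $H(n,k)$. This embeds $\mathrm{Sym}([n])$ into $\mathrm{Aut}(H(n,k))$. Next I would fix $A = \{1, 2, \ldots, k\}$ and describe $N(A)$ as the collection of sets $B = A \cup S$, where $S$ ranges over the $(n-2k)$-subsets of $[n] \setminus A$. The stabilizer of $A$ inside the image of $\mathrm{Sym}([n])$ is $\mathrm{Sym}(A) \times \mathrm{Sym}([n] \setminus A)$, and its second factor, being the full symmetric group on $[n] \setminus A$, acts transitively on the $(n-2k)$-subsets $S$. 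This already forces $\mathrm{Aut}(H(n,k))_{A}$ to act transitively on $N(A)$.

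Combining this with Proposition 1.1 then yields arc-transitivity in the usual way: given arcs $(u_1,v_1)$ and $(u_2,v_2)$, I first pick $\alpha \in \mathrm{Aut}(H(n,k))$ with $\alpha(u_1) = u_2$ (vertex-transitivity); then $\alpha(v_1) \in N(u_2)$, and by the local transitivity above there exists $\beta \in \mathrm{Aut}(H(n,k))_{u_2}$ with $\beta\bigl(\alpha(v_1)\bigr) = v_2$, so $\beta \alpha$ maps $(u_1,v_1)$ to $(u_2,v_2)$.

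I do not expect a real obstacle here; the argument is structural rather than computational. The one delicate point worth flagging is that when $u_1$ and $u_2$ lie in different parts of the bipartition, the automorphism $\alpha$ produced by Proposition 1.1 must cross the bipartition, and such an $\alpha$ is not available from $\mathrm{Sym}([n])$ alone. However, the complementation involution $X \mapsto [n] \setminus X$ is manifestly an automorphism of $H(n,k)$ (it reverses inclusion, which still preserves adjacency in this undirected setting) and swaps $V_1$ with $V_2$; together with $\mathrm{Sym}([n])$ it already accounts for vertex-transitivity, so this point causes no trouble.
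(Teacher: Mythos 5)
The paper does not actually prove this proposition: it is quoted from reference [16] (Theorem 3.2 there), so there is no in-text argument to compare yours against. Judged on its own merits, your proof is correct and complete. The reduction to local transitivity is sound: under vertex-transitivity the stabilizers of any two vertices are conjugate and conjugation carries neighbourhoods to neighbourhoods, so transitivity of $\mathrm{Aut}(H(n,k))_A$ on $N(A)$ for the single vertex $A=\{1,\dots,k\}$ is enough to run your two-step composition $\beta\alpha$ at an arbitrary base vertex $u_2$. Your description of $N(A)$ as $\{A\cup S : S\subseteq [n]\setminus A,\ |S|=n-2k\}$ is right, the setwise stabilizer of $A$ in the image of $\mathrm{Sym}([n])$ is indeed $\mathrm{Sym}(A)\times \mathrm{Sym}([n]\setminus A)$, and its second factor visibly acts transitively on the sets $S$. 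You were also right to flag the bipartition-crossing issue: $\mathrm{Sym}([n])$ alone preserves $V_1$ and $V_2$, so the complementation map $X\mapsto [n]\setminus X$ is genuinely needed, and invoking it makes your argument self-contained (it re-derives Proposition 1.1 rather than merely citing it). This is the standard stabilizer-based proof of arc-transitivity and is entirely in the spirit of how the present paper works with the subgroup $\langle H,\alpha\rangle$ in Theorem 3.6, so I would expect the proof in [16] to proceed along essentially the same lines.
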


\begin{cor}$[16, \  Corollary \ 3.3]$ The connectivity of the  bipartite Kneser graph $H(n, k)$ is
   maximum, namely,  ${n-k}\choose{k}$.

\end{cor}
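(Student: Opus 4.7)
The plan is to deduce Corollary 1.3 from Proposition 1.2 together with a standard theorem on the connectivity of edge-transitive graphs. First, since $H(n,k)$ is regular of valency $\binom{n-k}{k}$, the trivial chain $\kappa(H(n,k)) \leq \lambda(H(n,k)) \leq \delta(H(n,k))$ valid for any graph immediately gives the upper bound $\kappa(H(n,k)) \leq \binom{n-k}{k}$. Only the matching lower bound requires work.

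For the lower bound I would invoke the classical result (due to Watkins and Mader) that any connected, vertex- and edge-transitive graph has vertex connectivity equal to its minimum degree. Proposition 1.2 asserts that $H(n,k)$ is arc-transitive, hence in particular vertex- and edge-transitive; together with the connectedness noted in the introduction, the cited theorem gives $\kappa(H(n,k)) \geq \binom{n-k}{k}$, and the corollary follows.

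If one preferred a self-contained proof avoiding any appeal to an external theorem, the natural route would be Menger's theorem: exhibit $\binom{n-k}{k}$ internally disjoint paths between any two vertices $A$ and $B$. By arc-transitivity one may reduce to a finite list of representative pairs indexed by $d(A,B)$, and the poset structure of inclusion on subsets of $[n]$ suggests organizing the paths so that each one uses a distinct element of the appropriate symmetric difference to travel from $A$ toward $B$. The main obstacle in this direct approach is the combinatorial bookkeeping to ensure that the paths really are internally disjoint, respect the bipartition, and have the correct length parities; for the statement of the corollary the symmetry argument above is both shorter and conceptually cleaner, and is the route I would take.
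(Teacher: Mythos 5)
Your argument is correct and takes essentially the route the source intends: the statement is presented as a corollary of Proposition 1.2 (arc-transitivity), and the proof in the cited reference [16] is precisely the appeal to the Watkins--Mader theorem that a connected edge-transitive graph has connectivity equal to its minimum degree, combined with regularity of degree ${n-k}\choose{k}$ and the connectedness noted in the introduction. The present paper gives no independent proof of this quoted corollary, so there is nothing further to compare; your first two paragraphs constitute the whole argument, and the Menger-style alternative you sketch is unnecessary.
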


\begin{prop}$[16, \  Proposition \ 3.5] $ The bipartite Kneser graph $H(n, 1)$ is a  Cayley graph.

\end{prop}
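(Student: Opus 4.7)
The plan is to exhibit $H(n,1)$ explicitly as a Cayley graph on a group of order $2n$. First I would make the vertex structure of $H(n,1)$ very concrete: the vertex set consists of the $n$ singletons $\{i\}$ and the $n$ co-singletons $[n]\setminus\{j\}$, and by definition $\{i\}$ is adjacent to $[n]\setminus\{j\}$ exactly when $i \neq j$. Equivalently, $H(n,1)$ is the complete bipartite graph $K_{n,n}$ with a perfect matching removed (the so-called crown graph on $2n$ vertices), an $(n-1)$-regular bipartite graph.

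Next I would take $G = \mathbb{Z}_n \times \mathbb{Z}_2$ and identify the vertex $(i,0)$ with the singleton $\{i+1\}$ and the vertex $(i,1)$ with the co-singleton $[n]\setminus\{i+1\}$ (indices modulo $n$). I would then set $S = \{(k,1) : k \in \mathbb{Z}_n\setminus\{0\}\}$. It is immediate that $S = -S$ because $-(k,1) = (-k,1)$ in $G$, so $\mathrm{Cay}(G, S)$ is a well-defined undirected graph of valency $|S| = n-1$, matching the valency of $H(n,1)$. A short check also shows that $S$ generates $G$ for $n \geq 3$, so the Cayley graph is connected.

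The core step is then to check that adjacencies match under the identification above. For a vertex $(a,0) \leftrightarrow \{a+1\}$, its $\mathrm{Cay}(G,S)$-neighbours are the translates $(a,0) + (k,1) = (a+k,1)$ for $k \neq 0$, which correspond to the co-singletons $[n]\setminus\{j+1\}$ for all $j = a+k \neq a$: these are precisely the co-singletons containing $\{a+1\}$, i.e.\ the $H(n,1)$-neighbours of $\{a+1\}$. The check on vertices of the form $(a,1)$ is symmetric. Hence $H(n,1) \cong \mathrm{Cay}(\mathbb{Z}_n\times \mathbb{Z}_2, S)$, which proves the claim.

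The only real obstacle is guessing the right group and generating set; once $\mathbb{Z}_n \times \mathbb{Z}_2$ is chosen (motivated by the fact that the graph has $2n$ vertices and is bipartite with the two parts swapped by a natural involution), the adjacency verification is a routine calculation. One could equally well use the dihedral group $D_{2n} = \langle r,s : r^n = s^2 = 1,\, srs = r^{-1}\rangle$ with connection set $\{r^is : 1 \leq i \leq n-1\}$, giving an alternative Cayley presentation of the same graph.
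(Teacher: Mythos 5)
Your proof is correct. Note that the paper itself states this proposition without proof, citing [16]; there the result is established by essentially the same construction you give, namely recognizing $H(n,1)$ as the crown graph ($K_{n,n}$ minus a perfect matching) and presenting it as a Cayley graph on a group of order $2n$ (the dihedral group in [16], your $\mathbb{Z}_n\times\mathbb{Z}_2$ works equally well, as you observe). Your adjacency verification and the check that $S=-S$ and that $S$ generates the group for $n\geq 3$ (which is the relevant range, since $n>2k=2$) are all in order.
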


\begin{thm}$[16, \ Theorem \ 3.6] $  Let $H(n,1)$ be a bipartite Kneser graph.
 Then,   $Aut(H(n,1))   \cong Sym([n] ) \times \mathbb{Z}_2$, where $\mathbb{Z}_2$ is the cyclic group of order $2$.

\end{thm}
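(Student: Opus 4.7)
The plan is to exhibit a natural injection $\Phi:Sym([n])\times\mathbb{Z}_2\hookrightarrow Aut(H(n,1))$ and then to bound $|Aut(H(n,1))|$ via a vertex stabilizer, showing that it already equals $2\cdot n!$.

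For each $\sigma\in Sym([n])$, define $\hat{\sigma}$ on $V(H(n,1))$ by $\hat{\sigma}(\{i\})=\{\sigma(i)\}$ and $\hat{\sigma}([n]\setminus\{i\})=[n]\setminus\{\sigma(i)\}$; this preserves the subset relation, so $\hat{\sigma}\in Aut(H(n,1))$. For the $\mathbb{Z}_2$ factor, take the complement map $\tau:v\mapsto[n]\setminus v$, which swaps $V_1$ and $V_2$ and sends each inclusion $A\subset B$ to $[n]\setminus B\subset [n]\setminus A$. A short check shows that $\hat{\sigma}\tau=\tau\hat{\sigma}$, so the assignment $(\sigma,\varepsilon)\mapsto\hat{\sigma}\tau^{\varepsilon}$ is a homomorphism. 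Its kernel is trivial: only $\tau$ exchanges the two parts, so $\varepsilon$ is determined by the automorphism, and then $\sigma$ is determined by $\hat{\sigma}$. Hence $|Aut(H(n,1))|\geq 2\cdot n!$.

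For the reverse inequality I use vertex-transitivity (Proposition 1.1) together with the orbit-stabilizer theorem: $|Aut(H(n,1))|=2n\cdot|Stab(\{1\})|$, so it suffices to prove $|Stab(\{1\})|\leq (n-1)!$. Since $H(n,1)$ is a connected bipartite graph, any automorphism fixing $\{1\}$ preserves the bipartition (the part containing $\{1\}$ is exactly the set of vertices at even distance from it). The key structural observation is that, for every $i\in[n]$, the vertex $[n]\setminus\{i\}$ is the \emph{unique} non-neighbor of $\{i\}$ inside $V_2$. Consequently, any $f\in Stab(\{1\})$ must respect the canonical pairing $\{i\}\leftrightarrow [n]\setminus\{i\}$: if $f(\{i\})=\{j\}$ then necessarily $f([n]\setminus\{i\})=[n]\setminus\{j\}$. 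So $f$ is determined by its action on $V_1$, and since $f(\{1\})=\{1\}$ that action is an arbitrary permutation of the $n-1$ remaining singletons, yielding at most $(n-1)!$ possibilities.

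Combining the two bounds gives $|Aut(H(n,1))|=2\cdot n!$, so $\Phi$ is an isomorphism. The crux of the argument -- and the main obstacle to overcome -- is the \emph{unique non-neighbor} observation that rigidly couples the two sides of the bipartition. Once this link is in place, the bound on the stabilizer is immediate and the theorem falls out of a simple counting argument; without it, one cannot rule out independent permutations of $V_1$ and $V_2$ that would enlarge the group.
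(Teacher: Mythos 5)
Your proof is correct, but it follows a genuinely different route from the one taken in this paper (which does not reprove the $k=1$ case separately; it is subsumed by the general Theorem~3.6). The paper's strategy is to show that any automorphism preserving the bipartition restricts on $V_1$ to an automorphism of the Johnson graph $J(n,k)$ (via a count of common neighbours), to invoke the known result $Aut(J(n,k))\cong Sym([n])$, and then to use a rigidity lemma (Lemma~3.1: an automorphism fixing $V_1$ pointwise is the identity) to recover the full automorphism. Your argument instead combines the orbit--stabilizer theorem with the observation that $[n]\setminus\{i\}$ is the \emph{unique} non-neighbour of $\{i\}$ in $V_2$, which pins $f|_{V_2}$ to $f|_{V_1}$ and gives $|Stab(\{1\})|\leq (n-1)!$, hence $|Aut(H(n,1))|\leq 2\cdot n!$; matching this against the explicit subgroup $\langle H,\tau\rangle$ of order $2\cdot n!$ finishes the proof. (Your phrase ``arbitrary permutation'' is harmless since only the upper bound $(n-1)!$ is needed there, and in fact all such permutations are realized.) Your approach is more elementary and entirely self-contained --- it needs neither the Johnson graph result nor the pointwise-fixing lemma --- but it is special to $k=1$: for $k\geq 2$ a vertex of $V_1$ has many non-neighbours in $V_2$, so the unique-non-neighbour pairing breaks down, whereas the paper's reduction to $J(n,k)$ is exactly what makes the general case go through.
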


In $[16]$ the authors  proved the following theorem.

\begin{thm}$[16, \ Theorem \ 3.8]$  Let $n=2k-1$. Then, for the    bipartite Kneser graph $H(n,k-1)$, we have
  $Aut(H(n,k)) \cong Sym([n])  \times \mathbb{Z}_2$, where $\mathbb{Z}_2$ is the cyclic group of order $2$.

\end{thm}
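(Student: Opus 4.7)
The plan is to exhibit an obvious copy of $G:=Sym([n])\times\mathbb{Z}_2$ inside $Aut(H(n,k-1))$ and then show every automorphism already lies in $G$. The $Sym([n])$ factor acts by permuting the ground set, which preserves both set-size and containment. The $\mathbb{Z}_2$ factor is generated by complementation $A\mapsto[n]\setminus A$, which (because $n-(k-1)=k$) swaps the bipartition classes $V_1,V_2$ and reverses containment, hence preserves edges. A direct check shows these two actions commute and that complementation does not lie in the image of $Sym([n])$ (it swaps parts whereas no permutation does), giving an injection $Sym([n])\times\mathbb{Z}_2\hookrightarrow Aut(H(n,k-1))$ of order $2\cdot n!$.

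Next I would take an arbitrary $f\in Aut(H(n,k-1))$. Since $H(n,k-1)$ is connected and bipartite, $f$ either preserves or swaps $\{V_1,V_2\}$; after composing with complementation we may assume $f$ preserves the bipartition, and write $f_1=f|_{V_1}$, $f_2=f|_{V_2}$. The key observation is that two distinct vertices of $V_1$ lie at distance $2$ in $H(n,k-1)$ iff they share a common $k$-subset neighbour, equivalently iff their intersection has size $k-2$; similarly distinct vertices of $V_2$ are at distance $2$ iff their intersection has size $k-1$. Hence $f_1$ and $f_2$ are automorphisms of the Johnson graphs $J(n,k-1)$ and $J(n,k)$ respectively.

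Because $n=2k-1$, neither of the parameters $k-1,k$ equals $n/2$, so by the classical description of $Aut(J(n,m))$ we have $Aut(J(n,k-1))\cong Aut(J(n,k))\cong Sym([n])$. Thus $f_1,f_2$ are induced by permutations $\sigma_1,\sigma_2\in Sym([n])$. The last step forces $\sigma_1=\sigma_2$: for any $A\in V_1$ and $j\notin A$, the edge $A\subset A\cup\{j\}$ gives $\sigma_1(A)\subset\sigma_2(A)\cup\{\sigma_2(j)\}$; if $\sigma_1(A)\ne\sigma_2(A)$, then $\sigma_2(j)\in\sigma_1(A)$ must hold for every $j\notin A$, contradicting the fact that $\{\sigma_2(j):j\notin A\}$ has $k$ elements while $\sigma_1(A)$ has only $k-1$. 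Faithfulness of the induced action of $Sym([n])$ on $(k-1)$-subsets then upgrades $\sigma_1=\sigma_2$ pointwise to an equality in $Sym([n])$.

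The main obstacle is the appeal to $Aut(J(n,m))\cong Sym([n])$ for $n\ne 2m$, a classical but nontrivial result that must be quoted; everything else is a clean compatibility argument, and the hypothesis $n=2k-1$ is precisely what keeps both induced Johnson graphs out of the exceptional case $n=2m$, where complementation would already contribute an extra factor of $\mathbb{Z}_2$ on each side and make the bookkeeping more delicate.
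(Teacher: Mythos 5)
Your proof is correct, and at the strategic level it matches the paper's argument (carried out in general form in Theorem 3.6): exhibit $Sym([n])\times\mathbb{Z}_2$ as the subgroup generated by the induced maps $f_\theta$ and complementation $\alpha$, use connectedness and bipartiteness to reduce to the part-preserving case, recognize the restriction to a part as an automorphism of a Johnson graph, and quote the classical fact $Aut(J(n,m))\cong Sym([n])$ for $n\neq 2m$. You differ at two tactical points. First, to see that $f|_{V_1}$ is a Johnson automorphism you use the distance-$2$ characterization (two distinct $(k-1)$-sets have a common $k$-set neighbour iff they meet in $k-2$ points). This is clean but works only because $n=2k-1$ places $V_1$ and $V_2$ on consecutive levels of the Boolean lattice; for general $H(n,k)$ with $n\geq 3k$ any two $k$-sets lie in a common $(n-k)$-set, the distance-$2$ graph on $V_1$ is complete, and no information is obtained --- which is why the paper instead counts the exact number $\binom{n-k-h}{k}$ of common neighbours as a function of $h=k-|v\cap w|$ and uses strict monotonicity of binomial coefficients (Lemma 3.5). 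Second, you determine $f$ on both parts, obtaining permutations $\sigma_1,\sigma_2$ and forcing $\sigma_1=\sigma_2$ by a pigeonhole argument on the edges $A\subset A\cup\{j\}$, whereas the paper analyzes only $V_1$ and finishes with the rigidity statement of Lemma 3.1 (an automorphism fixing $V_1$ pointwise is the identity). Both routes are sound; yours is somewhat more elementary and self-contained for the middle-levels case $n=2k-1$ stated here, while the paper's version is the one that extends to all $n>2k$.
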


In [16] the authors asked  the following question. \\\\
{ \bf Question } Is the above theorem true for all possible values of $n,k$ ( $2k <  n$)? \\

In the sequel, we want to answer the above question. We show that the above theorem is true for all possible values of $n,k$ ( $2k <  n$).  \

 In fact, to the best of our knowledge,  the present work is   the first answer on this problem.
We   determine the automorphism group  of the graph $H(n, k)$ and show that $Aut(H(n, k)) \cong Sym([n] ) \times \mathbb{Z}_2$,  where $\mathbb{Z}_2$ is the cyclic group of order $2$. In the final step of our work, we offer a new proof for determining the automorphism group of the Kneser graph $K(n,k)$ which we belief this proof is more elementary than other known proofs of this result. Note that the known proofs for determining the automorphism groups of Johnson graph $J(n,k)$ and Kneser graph $ K(n,k)$ are independent from each other.   we show   how we can have the automorphism group  of the Kneser graph $K(n,k)$ in the hand, if we have the automorphism group  of the Johnson  graph $J(n,k)$ in another hand. \\

There are various important families of graphs $\Gamma$,  in which we know that for a particular group $G$, we have
$G \leq Aut(\Gamma)$, but  showing  that in fact we have  $G = Aut(\Gamma)$, is a difficult task. For example note to the following cases. \newline

(1) \  The Boolean lattice $BL_n, n \geq 1$, is the graph whose vertex set is the set of all subsets of $[n]= \{ 1,2,...,n \}$, where two subsets $x$ and $y$ are adjacent if their symmetric difference has precisely one element. The hypercube  $Q_n$ is the graph whose vertex set is $ \{0,1  \}^n $, where two $n$-tuples  are adjacent if  they differ in precisely one coordinates. It is an easy task to show that $Q_n \cong BL_n $, and $ Q_n \cong Cay(\mathbb{Z}_{2}^n, S )$, where $\mathbb{Z}_{2}$ is
the cyclic group of order 2, and $S=\{ e_i \  | \  1\leq i \leq n \}, $ where  $e_i = (0, ..., 0, 1, 0, ..., 0)$,  with 1 at the $i$th position. It is an easy task to show that the set  $H= \{ f_\theta |\  \theta \in Sym([n]) \} $, $ f_\theta (\{x_1, ..., x_n \}) = \{ \theta (x_1), ..., \theta (x_n) \}$ is a subgroup of $Aut(BL_n)$, and hence $H$ is a subgroup of the group $Aut(Q_n)$.  We know that in every Cayley graph $\Gamma= Cay(G,S)$, the group $Aut(\Gamma)$ contains a subgroup isomorphic with the group $G$.  Therefore,  $\mathbb{Z}_{2}^n $ is a subgroup of $Aut(Q_n)$. Now,   showing that $Aut(Q_n) = <\mathbb{Z}_{2}^n, Sym([n])>( \cong \mathbb{Z}_{2}^n \rtimes Sym([n]))$, is  not an easy task [14]. \newline

(2) \  Let  $n,k \in \mathbb{ N}$ with $ k < \frac{n}{2}  $ and Let $[n]=\{1,...,n\}$.   The Kneser graph $K(n,k)$ is defined as the graph whose vertex set is $V=\{v\mid v\subseteq [n], |v|=k\}$ and two vertices $v$,$w$ are adjacent if and only if $|v\cap w|$=0.  The   Kneser graph $K(n,k)$  is  a vertex-transitive graph [5].  It is an easy task to show that the set  $H= \{ f_\theta \  |\  \theta \in Sym([n]) \} $,  $ f_\theta (\{x_1, ..., x_k \}) = \{ \theta (x_1), ..., \theta (x_k) \}$,  is a subgroup of  $ Aut ( K(n,k) )$ [5].   But,  showing  that
 $$ H= \{ f_\theta \  |\  \theta \in Sym([n]) \}= Aut ( K(n,k) )$$
 is rather a difficult  work [5, chapter 7]. \newline

(3) \   Let $n$ and $k$ be integers with  $n> k\geq1$ and let $[n]  = \{1, 2, ... , n\}$.  We now  consider the bipartite Kneser graph $\Gamma = H(n,k)$.    Let $A,B$ be $m$-subsets of $[n]$  and let  $ | A \cap B |=t$. Let  $\theta$ be  a permutation in $Sym([n])$. It is an easy task to show that
  $  | f_\theta(A) \cap  f_\theta(B) |=t$, where $ f_\theta (\{x_1, ..., x_m \}) = \{ \theta (x_1), ..., \theta (x_m) \}$.
 Moreover, if $ A\subset B$,  then $  f_\theta(A) \subset f_\theta(B) $. Therefore,  if $\theta \in Sym([n])$,  then
$$ f_\theta : V(H(n,k) )\longrightarrow V(H(n,k)),
 f_\theta (\{x_1, ..., x_k \}) = \{ \theta (x_1), ..., \theta (x_k) \} $$
 is an automorphism of $ H(n,k) $ and the mapping,   \newline
  $ \psi : Sym ([n]) \longrightarrow Aut (H(n,k) )$, defined by
 the rule $ \psi ( \theta ) = f_\theta $ is an injection. Therefore, the set  $H= \{ f_\theta \ |\  \theta \in Sym([n]) \} $,  is a subgroup of $ Aut ( H(n,k) ) $ which is isomorphic with $Sym([n])$.
 Also, the mapping $\alpha : V(\Gamma)\rightarrow V(\Gamma) $,  defined  by the rule, $\alpha(v) = v^c$,  where
$v^c$ is  the complement of the subset $v$ in  $[n]$, is an automorphism of the  graph $B(n,k)$. In fact,
if $ A\subset B$, then $ B^c\subset A^c$, and hence if \{A,B\} is an edge of the graph $ B(n,k) $, then $\{\alpha(A), \alpha(B)\}$ is an edge of the graph $ H(n,k) $.  Therefore we have,
$ < H, \alpha> \leq Aut(H(n,k)) $.  \newline
In this paper, we want to show that for the bipartite Kneser graph  $H(n,k)$,  in fact we have,  $ Aut(H(n,k))=<H, \alpha>$($ \cong  Sym([n])\times \mathbb{Z}_2)$.

\section{Preliminaries}
In this paper, a graph $\Gamma=(V,E)$ is
considered as a finite undirected simple graph where $V=V(\Gamma)$ is the vertex-set
and $E=E(\Gamma)$ is the edge-set. For all the terminology and notation
not defined here, we follow $[1,4,5]$.

The graphs $\Gamma_1 = (V_1,E_1)$ and $\Gamma_2 =
(V_2,E_2)$ are called $isomorphic$, if there is a bijection $\alpha
: V_1 \longrightarrow V_2 $   such that  $\{a,b\} \in E_1$ if and
only if $\{\alpha(a),\alpha(b)\} \in E_2$ for all $a,b \in V_1$.
In such a case the bijection $\alpha$ is called an isomorphism.
An automorphism of a graph $\Gamma $ is an isomorphism of $\Gamma
$ with itself. The set of automorphisms of $\Gamma$  with the
operation of composition of functions is a group, called the
$automorphism\  group$ of $\Gamma$ and denoted by $ Aut(\Gamma)$.

 The
group of all permutations of a set $V$ is denoted by $Sym(V)$  or
just $Sym(n)$ when $|V| =n $. A $permutation\  group$ $G$ on
$V$ is a subgroup of $Sym(V)$. In this case we say that $G$ act
on $V$. If $X$ is a graph with vertex-set $V$, then we can view
each automorphism as a permutation of $V$, and so $Aut(X)$ is a
permutation group. If $G$ acts on $V$, we say that $G$ is
$transitive$ (or $G$ $acts\  transitively$ on $V$), when there is just
one orbit. This means that given any two elements $u$ and $v$ of
$V$, there is an element $ \beta $ of  $G$ such that  $\beta (u)= v
$.

The graph $\Gamma$ is called $vertex$-$transitive$, if  $Aut(\Gamma)$
acts transitively on $V(\Gamma)$.  For $v\in V(\Gamma)$ and $G=Aut(\Gamma)$, the stabilizer subgroup
$G_v$ is the subgroup of $G$ consisting of all automorphisms that
fix $v$. We say that $\Gamma$ is $symmetric$ (or $arc$-$transitive$) if, for all vertices $u, v, x, y$ of $\Gamma$ such that $u$ and $v$ are adjacent, also, $x$ and $y$ are adjacent, there is an automorphism $\pi$ in $Aut(\Gamma)$ such that $\pi(u)=x$ and $\pi(v)=y$.

Let  $n,k \in \mathbb{ N}$ with $ k \leq  \frac{n}{2}$,  and let $[n]=\{1,...,n\}$. The $Johnson\  graph$ $J(n,k)$ is defined as the graph whose vertex set is $V=\{v\mid v\subseteq [n], |v|=k\}$ and two vertices $v$,$w$ are adjacent if and only if $|v\cap w|=k-1$.  The Johnson  graph $J(n,k)$ is a vertex-transitive graph [5].  It is an easy task to show that the set  $H= \{ f_\theta |\  \theta \in Sym([n]) \} $,  $f_\theta (\{x_1, ..., x_k \}) = \{ \theta (x_1), ..., \theta (x_k) \} $,    is a subgroup of $ Aut( J(n,k) ) $[5].   It has been shown that  $Aut(J(n,k)) \cong Sym([n])$, if  $ n\neq 2k, $  and $Aut(J(n,k)) \cong Sym([n]) \times \mathbb{Z}_2$, if $ n=2k$,   where $\mathbb{Z}_2$ is the cyclic group of order 2 [2,9,15]. \

 Although, in most situations  it is difficult  to determine the automorphism group
of a graph $\Gamma$ and how it acts on the vertex set of $\Gamma$,  there are various papers in the literature,   and some of the recent works
appear in the references [7,8,9,10,12,13,14,15,16,18,19].

\section{Main results}

\begin{lem} Let $n$ and $k$ be integers with  $\frac{n}{2}> k\geq1$, and
let $\Gamma= (V,E)= H(n,k)$ be a bipartite Kneser graph with partition $V=V_1 \cup V_2 $, $V_1 \cap V_2 =\varnothing$, where $ V_1= \{ v \  |  \  v\subset [n], |v|=k \}$ and  $ V_2= \{ w \ |\  w \subset [n], |w|=n-k \}$. If $f$ is an automorphism of $ \Gamma$ such that $f(v)=v$ for every $v\in V_1$, then $f$ is the identity automorphism of $ \Gamma$.

\end{lem}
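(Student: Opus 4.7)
The plan is to show that $f$ sends each $(n-k)$-subset to itself by exploiting the fact that in $H(n,k)$ the neighborhood of a vertex in $V_2$ is precisely the set of $k$-subsets contained in it, and that an $(n-k)$-subset is determined by its $k$-subsets (since $n-k \geq k$).

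First, I would observe that because $f$ fixes every element of $V_1$ pointwise, we have $f(V_1) = V_1$, and since $f$ is a bijection of $V = V_1 \cup V_2$ with $V_1 \cap V_2 = \varnothing$, it follows that $f(V_2) = V_2$. So the automorphism $f$ preserves the bipartition and it remains only to show that $f$ fixes each $w \in V_2$.

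Next, I would fix an arbitrary $w \in V_2$ and analyze the neighborhood $N_\Gamma(w)$. By the definition of $H(n,k)$, the neighbors of $w$ are exactly the $k$-subsets $v \subset w$, so $N_\Gamma(w) \subseteq V_1$. Since $f$ is an automorphism, $N_\Gamma(f(w)) = f(N_\Gamma(w))$, and since $f$ fixes $V_1$ pointwise, $f(N_\Gamma(w)) = N_\Gamma(w)$. Hence the set of $k$-subsets contained in $f(w)$ equals the set of $k$-subsets contained in $w$.

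Finally, because $|w| = n - k \geq k$ (using the hypothesis $n > 2k$ and $k \geq 1$), every element of $w$ belongs to some $k$-subset of $w$, so $w$ is the union of its $k$-subsets; the same holds for $f(w)$. Taking the union of the common collection of $k$-subsets gives $w = f(w)$. As $w \in V_2$ was arbitrary and $f$ also fixes $V_1$, we conclude $f$ is the identity. The argument is essentially a one-step deduction from the combinatorial description of neighborhoods; I do not anticipate a serious obstacle beyond carefully invoking the inequality $n - k \geq k$ to recover $w$ from its $k$-subsets.
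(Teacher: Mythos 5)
Your proposal is correct and follows essentially the same route as the paper: deduce $f(V_2)=V_2$, compare $N_\Gamma(w)$ with $N_\Gamma(f(w))$ using that $f$ fixes $V_1$ pointwise, and conclude $w=f(w)$ because an $(n-k)$-subset is determined by its family of $k$-subsets. You merely make explicit (via the union argument and the inequality $n-k\geq k$) the step the paper dismisses as "an easy task."
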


\begin{proof}

First, note that since $f$ is a permutation of the vertex set $V$
and $f(V_1)=V_1$, then $f(V_2)= V_2$.
Let $ w\in V_2$ be an arbitrary vertex in $V_2$. Since $f$ is an automorphism of the graph $\Gamma$, then for the set  $N(w)= \{ v |  v\in V_1, v\leftrightarrow w \}$, we have $f(N(w))= \{ f(v) |  v\in V_1, v\leftrightarrow w \}=N(f(w))$. On the other hand, since for every $v\in V_1$, $f(v)=v$, then
$f(N(w))=N(w) $, and therefore $N(f(w))=N(w) $. In other words,  $w$ and $f(w)$ are $(n-k)$-subsets of $[n]$ such that
their family of $k$-subsets are the same. Now, it is an easy task to show that $f(w)=w$. Therefore, for every vertex $x$ in $\Gamma$  we have $f(x)=x$ and thus $f$ is the identity automorphism of $\Gamma$.

\end{proof}

\begin{rem} If in the assumptions of the  above lemma,  we replace  with  $f(v)=v$ for every $v\in V_2$, then we can show, by a similar discussion,   that $f$ is the identity automorphism of $ \Gamma$.
\end{rem}

\begin{lem} Let $\Gamma =(V, E)$ be a connected  bipartite graph with partition $V=V_1 \cup V_2$, $V_1 \cap V_2 = \varnothing $. Let $ f$ be an automorphism of $\Gamma $ such that for a fixed vertex $v \in V_1 $, we have $ f(v) \in V_1$. Then, $f(V_1) = V_1$ and $f(V_2) =V_2$. Or, for a fixed vertex $v \in V_1 $,   we have $ f(v) \in V_2$. Then, $f(V_1) = V_2$ and $f(V_2) =V_1$.
\end{lem}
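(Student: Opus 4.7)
The plan is to exploit the standard characterization of bipartiteness via walk parity: in a connected bipartite graph with bipartition $\{V_1,V_2\}$, two vertices lie in the same part if and only if some (equivalently, every) walk between them has even length, and lie in different parts if and only if some walk between them has odd length. Since an automorphism sends walks to walks of the same length, the parity information is preserved.

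Concretely, I would proceed as follows. First, fix the reference vertex $v\in V_1$ from the hypothesis. For an arbitrary vertex $u\in V_1$, connectedness of $\Gamma$ together with bipartiteness gives a walk $v=u_0,u_1,\ldots,u_{2m}=u$ of even length. Applying $f$ edge by edge yields the walk $f(v)=f(u_0),f(u_1),\ldots,f(u_{2m})=f(u)$, again of length $2m$. Because $\Gamma$ is bipartite, $f(u)$ must therefore lie in the same part of the bipartition as $f(v)$. Similarly, for any $u\in V_2$ there is a walk of odd length from $v$ to $u$, and its image is a walk of odd length from $f(v)$ to $f(u)$, so $f(u)$ lies in the part opposite to that of $f(v)$.

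Now split into the two cases of the statement. If $f(v)\in V_1$, the above observations give $f(V_1)\subseteq V_1$ and $f(V_2)\subseteq V_2$; since $f$ is a bijection of $V=V_1\sqcup V_2$, these inclusions force $f(V_1)=V_1$ and $f(V_2)=V_2$. If instead $f(v)\in V_2$, the symmetric conclusion gives $f(V_1)\subseteq V_2$ and $f(V_2)\subseteq V_1$, and bijectivity again promotes the inclusions to equalities.

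There is really no hard step here; the only thing to be careful about is to invoke connectedness (to guarantee the existence of the walk from $v$ to $u$) and bipartiteness (to convert walk parity into part membership). Both are hypotheses of the lemma, so the argument is essentially a one-line observation dressed up as a case analysis.
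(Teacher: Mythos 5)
Your proof is correct and rests on the same idea as the paper's: an automorphism preserves the length of walks, and in a connected bipartite graph the parity of a walk between two vertices determines whether they lie in the same part. The paper packages this as an induction on the (even) distance from the fixed vertex $v$, while you argue directly via walk parity and then use bijectivity to upgrade the inclusions to equalities, which is a cleaner rendering of the same argument.
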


\begin{proof}
In the first step, we show that if $ w \in V_1 $ then $f(w) \in V_1$. We know that if $ w\in V_1$, then $ d_\Gamma (v, w) = d(v, w)$, the distance between $ v$ and $ w$ in the graph $ \Gamma$,  is an even integer.
Assume $ d(v, w) =2l$, $ 0\leq 2l \leq D$, where $ D$ is the diameter of $\Gamma $. We  prove by induction on $ l$,  that $ f(w) \in V_1$. If $ l=0$,  then $ d(v, w) =0$, thus $v=w$, and hence $f(w)=f(v) \in V_1$.
 Suppose that if  $ w_1 \in V_1$ and  $ d(v, w_1)= 2(k-1)$,  then $ f(w_1) \in V_1$.
Assume $ w \in V_1$ and $d(v, w)=2k $.  Then, there is a vertex $ u \in \Gamma $ such that
$d(v, u)=2k-2=2(k-1)$ and $ d(u, w)=2$.
We know  (by the induction assumption) that  $ f(u) \in V_1$  and since $ d(f(u),f(w))=2$, therefore  $f(w) \in V_1 $. Now, it follows that $ f(V_1)=V_1$ and  consequently   $ f(V_2)=V_2$.
\end{proof}

\begin{cor}
Let $\Gamma =H(n,k) = (V,E) $ be a bipartite Kneser graph with partition $V=V_1 \cup V_2$, $V_1 \cap V_2 = \varnothing  $. If $f$ is an automorphism of the graph $\Gamma$,  then $ f(V_1)=V_1$ and $f(V_2) =V_2$,  or $ f(V_1) = V_2 $ and $f(V_2) = V_1$.

\end{cor}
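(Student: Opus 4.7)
The plan is to derive this corollary directly from Lemma 3.3, which has already done all the real work. The hypotheses of Lemma 3.3 require only that $\Gamma$ be a connected bipartite graph with a fixed bipartition $V_1 \cup V_2$, and the bipartite Kneser graph $H(n,k)$ satisfies both conditions: the introduction already notes that $H(n,k)$ is connected, and $\{V_1, V_2\}$ is the standard bipartition into $k$-subsets and $(n-k)$-subsets of $[n]$.

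First, I would fix any vertex $v_0 \in V_1$ and consider $f(v_0)$. Since $f$ is a permutation of $V = V_1 \cup V_2$, the image $f(v_0)$ lies either in $V_1$ or in $V_2$. These two cases are exactly the two hypotheses of Lemma 3.3. In the first case, Lemma 3.3 yields $f(V_1) = V_1$ and $f(V_2) = V_2$. In the second case, Lemma 3.3 yields $f(V_1) = V_2$ and $f(V_2) = V_1$.

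Since the two cases exhaust the possibilities for $f(v_0)$, the corollary follows immediately. There is essentially no obstacle here; the statement is a purely mechanical specialization of Lemma 3.3 to the bipartite Kneser graph, and all substantive content (the induction along even-length paths showing that any automorphism either preserves or swaps the two sides of a connected bipartite graph) is contained in Lemma 3.3 itself.
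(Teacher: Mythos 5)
Your proof is correct and matches the paper's intent exactly: the paper states this corollary without proof precisely because it is the immediate specialization of Lemma 3.3 to the connected bipartite graph $H(n,k)$ with its standard bipartition. Nothing further is needed.
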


In the sequel, we need the following result for proving our  main  theorem.

\begin{lem}
Let $l,m,u $ are positive integers with $ l > u$ and $ m> u$. If $l>m$ then ${l} \choose {u}$ $>$ ${m} \choose {u}$.

\end{lem}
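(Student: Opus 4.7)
The plan is to prove this elementary monotonicity directly from the product formula
\[
\binom{l}{u} \;=\; \frac{l(l-1)\cdots(l-u+1)}{u!},
\]
comparing it factor-by-factor with the analogous expression for $\binom{m}{u}$. The hypotheses $l > u \geq 1$ and $m > u \geq 1$ guarantee that every factor in both numerators is a strictly positive integer (the smallest being $l - u + 1 \geq 2$ and $m - u + 1 \geq 2$). Because $l > m$, one has $l - i > m - i > 0$ for each $i = 0, 1, \dots, u-1$, and multiplying these strict inequalities between positive quantities yields $l(l-1)\cdots(l-u+1) > m(m-1)\cdots(m-u+1)$. Dividing both sides by $u!$ gives the desired inequality $\binom{l}{u} > \binom{m}{u}$.

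A second, equally quick route would proceed by induction on the gap $d = l - m \geq 1$, using Pascal's identity $\binom{l}{u} = \binom{l-1}{u} + \binom{l-1}{u-1}$. In the base case $d = 1$, this identity immediately yields $\binom{m+1}{u} - \binom{m}{u} = \binom{m}{u-1}$, which is positive because $m > u \geq 1$ makes $\binom{m}{u-1} \geq 1$. The inductive step is then immediate, since $\binom{l}{u} > \binom{l-1}{u} \geq \binom{m}{u}$ by the induction hypothesis applied to the pair $(l-1, m)$, whose gap is $d - 1$.

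There is no genuine obstacle in the argument; the only point requiring attention is verifying that the factors in the falling-factorial expansion are strictly positive, which is precisely what the assumptions $l > u$ and $m > u$ (with $u \geq 1$) are there to ensure. Without this positivity one could not upgrade the term-by-term inequality $l - i > m - i$ into a strict inequality of the full products. I would favor the direct product-comparison proof over the inductive one, as it is shorter and makes the role of each hypothesis transparent.
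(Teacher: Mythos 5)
Your product-comparison argument is correct and complete; the paper itself dismisses this lemma with ``The proof is straightforward,'' so your write-up simply supplies the routine details (term-by-term comparison of the falling factorials $l(l-1)\cdots(l-u+1)$ and $m(m-1)\cdots(m-u+1)$, with the hypotheses $l>u$ and $m>u$ guaranteeing positivity of every factor) that the author chose to omit. Nothing further is needed.
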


\begin{proof}
The proof is straightforward.
\end{proof}

\begin{thm}

Let $n$ and $k$ be integers with  $\frac{n}{2}> k\geq1$, and
let $\Gamma= (V,E)= H(n,k)$ be a bipartite Kneser graph with partition $V=V_1 \cup V_2 $, $V_1 \cap V_2 = \varnothing$, where $ V_1= \{ v \  | \  v\subset [n], |v|=k \}$ and  $ V_2= \{ w \  |  \ w \subset [n], |w|=n-k \}$. Then, $Aut(\Gamma) \cong Sym([n]) \times \mathbb{Z}_2$,  where $\mathbb{Z}_2$ is the cyclic group of order $2$.

\end{thm}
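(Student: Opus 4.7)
The plan is to show that $\mathrm{Aut}(\Gamma)$ coincides with the subgroup $G := \langle H, \alpha \rangle$ already exhibited in item (3) of the introduction. A direct computation shows $\alpha$ commutes with every $f_\theta$ (both equal the map $A \mapsto \theta(A)^c$), $\alpha^2 = \mathrm{id}$, and $\alpha \notin H$ since $n \neq 2k$; hence $G \cong \mathrm{Sym}([n]) \times \mathbb{Z}_2$ has order $2 \cdot n!$, and the only thing left to do is to prove the reverse inclusion $\mathrm{Aut}(\Gamma) \leq G$.

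The reduction to the Johnson graph proceeds in two steps. First, given $f \in \mathrm{Aut}(\Gamma)$, Corollary 3.4 tells us that either $f$ preserves the bipartition or swaps its parts. Since $\alpha$ swaps the parts, $\alpha \circ f$ preserves the bipartition in the second case, so after possibly replacing $f$ by $\alpha \circ f$ we may assume $f(V_1)=V_1$. Second, I claim that the restriction $f|_{V_1}$ is an automorphism of the Johnson graph $J(n,k)$ on $V_1$. The idea is that $J(n,k)$-adjacency on $V_1$ can be read off purely from the graph $\Gamma$ via common-neighbor counts: for distinct $A,B \in V_1$ with $|A \cap B| = s$, a common $\Gamma$-neighbor of $A$ and $B$ is an $(n-k)$-subset containing $A \cup B$, so
\[
|N_\Gamma(A) \cap N_\Gamma(B)| \;=\; \binom{n-2k+s}{k}.
\]
Applying Lemma 3.5 (with a short boundary check for the cases where $n-2k+s$ is close to $k$, which just observe that a binomial of value $0$ or $1$ is dominated by the next one), this quantity is a strictly increasing function of $s$ on $\{0,1,\dots,k-1\}$. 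Thus, among distinct pairs in $V_1$, the pairs with the maximum number of common $\Gamma$-neighbors are exactly the pairs with $s=k-1$, i.e.\ the edges of $J(n,k)$. Since $f$ preserves common-neighbor counts, $f|_{V_1} \in \mathrm{Aut}(J(n,k))$.

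To finish, invoke the known result (Section 2) that since $n \neq 2k$ we have $\mathrm{Aut}(J(n,k)) \cong \mathrm{Sym}([n])$, realized by $\theta \mapsto f_\theta$. Hence there is $\theta \in \mathrm{Sym}([n])$ with $f|_{V_1} = f_\theta|_{V_1}$, and then $f_\theta^{-1} \circ f$ fixes every vertex of $V_1$; Lemma 3.1 forces $f_\theta^{-1} \circ f = \mathrm{id}$, i.e.\ $f = f_\theta \in G$. Combined with the first reduction, this yields $\mathrm{Aut}(\Gamma) \leq G$, and therefore equality. The main obstacle I expect is the common-neighbor count argument: the computation itself is short, but one must verify strict monotonicity of $s \mapsto \binom{n-2k+s}{k}$ down to the smallest admissible $s$ (which may be $0$ or $3k-n$ depending on whether $n \geq 3k$), and only after that does the reduction to $\mathrm{Aut}(J(n,k))$ become available.
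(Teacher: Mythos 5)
Your proposal is correct and follows essentially the same route as the paper: exhibit $\langle H,\alpha\rangle\cong Sym([n])\times\mathbb{Z}_2$ inside $Aut(\Gamma)$, reduce via the bipartition-preserving/swapping dichotomy (composing with $\alpha$ in the swapping case), show $f|_{V_1}\in Aut(J(n,k))$ by the common-neighbour count $\binom{n-2k+s}{k}$ together with the monotonicity of Lemma 3.5, and conclude with the known $Aut(J(n,k))\cong Sym([n])$ and Lemma 3.1. Your explicit boundary check for degenerate binomial values is a small point of extra care that the paper's appeal to Lemma 3.5 glosses over, but the argument is the same.
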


\begin{proof}Let $\alpha : V(\Gamma)\rightarrow V(\Gamma) $,  defined  by the rule, $\alpha(v) = v^c$,  where
$v^c$ is  the complement of the subset $v$ in  $[n]$.
 Also, let  $ H =\{ f_\theta |\  \theta \in Sym ([n]) \} $,  $f_\theta (\{x_1, ..., x_k \}) = \{ \theta (x_1), ..., \theta (x_k)$.  We have seen already that $H( \cong Sym([n]))$
 and $<\alpha>( \cong \mathbb{Z}_2)$ are subgroups of the group $G= Aut(\Gamma)$.
 We can see that $ \alpha \not\in H $,   and for every $\theta \in Sym([n])$, we have,   $f_{\theta}\alpha= \alpha f_{\theta}$ [15]. Therefore,   \
$$  Sym([n]) \times \mathbb{Z}_2  \cong H \times <\alpha>\cong <H, \alpha> $$
$$ =\{  f_{\gamma} \alpha^i\ |   \   \gamma \in Sym([n]), 0\leq i \leq 1 \}=S$$ \
 is a subgroup of $G$. We now  want to show that $G=S$.  Let $f \in Aut(\Gamma)=G$. We show that $ f \in S$. There are two cases\newline
 (i)  There is a vertex $v \in V_1$   such that $f(v) \in V_1$, and hence by Lemma 3.3. we have    $f(V_1)=V_1$.\newline
 (ii)  There is a vertex $v \in V_1$   such that $f(v) \in V_2$, and hence by Lemma 3.3. we have    $f(V_1)=V_2$.  \

\

\ \ (i)  Let $f(V_1)=V_1$. Then, for every vertex $v \in V_1$ we have $f(v) \in V_1$, and therefore the mapping $ g=f_{|V_1}: V_1 \rightarrow  V_1$, is a permutation of $V_1$ where $ f_{|V_1}  $ is the restriction of $f$ to $V_1$. Let $ \Gamma_2= J(n,k)$ be the Johnson graph with the vertex set $V_1$. Then, the vertices $ v,w \in V_1$ are adjacent in $\Gamma_2$  if and only if $| v \cap w| =k-1$.\

We assert that the permutation  $ g= f_{|V_1}  $ is an automorphism of the graph $\Gamma_2$. \newline
For proving our assertion, it is sufficient to show that if $v,w \in V_1$ are such that $| v \cap w| =k-1$ then we have  $ |g(v) \cap g(w) |= k-1$. Note that since $v,w$ are $k$-subsets of $[n]$, then if $u$ is a common neighbour
of $v,w$ in the bipartite Kneser graph $ \Gamma=H(n,k)$, then the set $u$ contains the sets $v$ and $w$. In particular $u$ contains the $(k+1)$-subset $ v\cup w$. We now can see that the number of vertices $u$, such that $u$ is adjacent in $\Gamma$ to both of the  vertices $v$ and $w$, is ${n-k-1}\choose  {n-2k-1}$.   Note that if $t$  is a positive integer such that $ k+1+t=n-k $,  then $t= n-2k-1$. Now,  if we
 adjoin to the $(k+1)$-subset $v \cup w$ of $[n]$, $n-2k-1$ elements  of the complement of $ v \cup w $ in $[n]$, then we obtain a subset $u$ of $[n]$ such that $v \cup w \subseteq u$ and $u$ is a $(n-k)$-subset of $[n]$.  Now, since $v$ and $w$ have $ {n-k-1} \choose{n-2k-1} $ common neighbours in the graph
 $\Gamma$, then the vertices $g(v)$ and $g(w)$ must have
${n-k-1} \choose{n-2k-1}$=${n-k-1} \choose{k}$ neighbours in $\Gamma$, and
 therefore $| g(v) \cap g(w) | $= $k-1$.
In fact, if $|g(v) \cap g(w)|= k-h < k-1 $, then $ h>1$ , and hence $ |g(v) \cup g(w)| = k+h $. Thus, if  $t$  is a positive integer such that $ k+h+t=n-k $,  then $t= n-2k-h$. Hence, for constructing  a $(n-k)$-subset $u \supseteq g(v) \cup g(w) $ we must adjoin $t=n-2k-h$ elements  of the complement of $ g(v) \cup g(w) $   in $[n]$,  to the set $ g(v) \cup g(w) $.  Therefore the number of  common neighbours of vertices $g(v)$ and  $ g(w) $  in the graph $\Gamma$ is
 ${n-k-h} \choose{n-2k-h}$=${n-k-h} \choose{k}$. Note that by Lemma 3.5. it follows that  ${n-k-h} \choose{k}$  $\neq$  ${n-k-1} \choose{k}$. \

Our argument shows that  the permutation $g=f_{|V_1}$ is an automorphism of the Johnson graph $ \Gamma_2=J(n,k) $ and therefore by [2 chapter 9, 15] there is a permutation $ \theta \in Sym([n]) $
such that $g= f_{\theta}$. \

On the other hand, we know that $ f_{\theta} $  by its natural action  on the vertex set of the bipartite Kneser graph $ \Gamma= H(n,k) $  is an automorphism of $ \Gamma$.   Therefore, $l=f_{\theta }^{-1}f  $ is an automorphism of the bipartite  Kneser graph $\Gamma$
such that $l$ is the identity automorphism on the subset $V_1$. We now can   conclude, by Lemma 3.1.  that $l= f_{\theta }^{-1}f$,  is the identity automorphism of $ \Gamma $, and therefore $f=f_{\theta }$. \

In other words, we have proved that if $f$ is an automorphism of $\Gamma = H(n,k)$ such that $f(V_1)=V_1$, then $f=f_{\theta}$, for some $ \theta \in Sym([n] )$, and hence $f \in S$.  \

\ (ii) We now assume that $ f(V_1) \neq V_1 $. Then,  $f(V_1) = V_2$. Since the mapping $ \alpha $  is an automorphism of the graph $ \Gamma $, then $f \alpha$ is an automorphism of $\Gamma $ such that $f \alpha(V_1) = f(\alpha(V_1))= f(V_2)=V_1$. Therefore, by what is proved in (i), we have $ f \alpha=f_{\theta} $, for some $ \theta \in Sym([n]) $. Now since $ \alpha    $  is of order $2$, then $f= f_{\theta} \alpha \in S=\{  f_{\gamma} \alpha^i \ | \  \gamma \in Sym([n]), 0\leq i \leq 1 \}$.

\end{proof}

Let $n,k$ be integers and  $n >4, \      k< \frac {n}{2}, \  [n]=\{1,2,...,n   \}$.  Let  $K(n,k)= \Gamma$
be a Kneser graph. It is a well known fact that,    $Aut (\Gamma)\cong Sym([n])$ [5, chap 7]. In fact,
 the proof in [5, chap 7] shows that the automorphism group of the Kneser graph $K(n,k)$ is the group
 $ H =\{ f_\theta \  |\  \theta \in Sym ([n]) \}(\cong Sym([n])) $.
 The  proof of  this result,  that appears in [5, chap 7],   uses the following fact which is one of  the  fundamental results in extermal set theory. \

\

{\bf Fact} (Erd\H{o}s-Ko-Rado)   If $ n> 2k $, then $ \alpha (K(n,k)) $ = $ {n-1} \choose {k-1}$, where $ \alpha (K(n,k)) $ is the independence number of the Kneser graph $K(n,k)$.

\

In the sequel, we provide a new proof for determining the automorphism groups of Kneser graphs.  The main  tool which we use in our method  is Theorem 3.6. Note that,  the main tool which we use  for proving Theorem 3.6. is  the automorphism group of Johnson graph $J(n,k)$, which have been already obtained [2 chapter  9, 15]  by using elementary and relevant facts of graph theory and group theory.

\begin{thm}
Assume $n,k$ are  integers and  $n >4, \      k< \frac {n}{2}, \  [n]=\{1,2,...,n   \}$. If $K(n,k)= \Gamma$ is a Kneser graph, then we have $Aut (\Gamma)\cong Sym([n])$.

\end{thm}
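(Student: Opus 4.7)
The plan is to reduce the theorem about $K(n,k)$ to the already-proved Theorem~3.6 about $H(n,k)$ by lifting every automorphism of the Kneser graph to an automorphism of the bipartite Kneser graph. Since each $f_\theta$ with $\theta \in \mathrm{Sym}([n])$ restricts to an automorphism of $K(n,k)$, the set $H = \{f_\theta|_{V_1} \mid \theta \in \mathrm{Sym}([n])\}$ is already a subgroup of $\mathrm{Aut}(K(n,k))$ isomorphic with $\mathrm{Sym}([n])$ (the injectivity of $\theta \mapsto f_\theta|_{V_1}$ follows from $n > 2k$, which provides $k$-subsets whose pairwise intersection can be pinned to any chosen singleton). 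The content of the theorem is the reverse inclusion.

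The key observation I will exploit is a direct translation between the two edge relations: for $k$-subsets $v, w$ of $[n]$, $v$ and $w$ are adjacent in $K(n,k)$ if and only if $v \cap w = \varnothing$, equivalently $v \subset w^c$, which is precisely the condition that the $k$-subset $v$ and the $(n-k)$-subset $w^c$ form an edge of $H(n,k)$. Given any $g \in \mathrm{Aut}(K(n,k))$, I will define the lift $\tilde g : V_1 \cup V_2 \to V_1 \cup V_2$ by $\tilde g(v) = g(v)$ for $v \in V_1$ and $\tilde g(w) = (g(w^c))^c$ for $w \in V_2$. A short check shows $\tilde g$ is a bijection on each part (on $V_2$ because complementation is an involution and $g$ permutes $V_1$), and the edge correspondence above immediately yields that $\{v, w\}$ is an edge of $H(n,k)$ if and only if $\{\tilde g(v), \tilde g(w)\}$ is; hence $\tilde g \in \mathrm{Aut}(H(n,k))$.

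Applying Theorem~3.6, I then obtain $\tilde g = f_\gamma \alpha^i$ for some $\gamma \in \mathrm{Sym}([n])$ and $i \in \{0,1\}$. Because $\tilde g|_{V_1} = g$ sends $V_1$ into $V_1$, whereas $\alpha$ interchanges $V_1$ and $V_2$ (which are disjoint since $k \neq n - k$), the case $i = 1$ is excluded, leaving $g = f_\gamma|_{V_1} \in H$. The main obstacle is conceptual rather than computational: everything turns on finding the correct lift. Once the ``extend by complementation on $V_2$'' trick is in place, the edge equivalence and the elimination of the $i=1$ alternative are essentially one-line verifications, and the appeal to Theorem~3.6 closes the argument.
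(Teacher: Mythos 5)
Your proposal is correct and follows essentially the same route as the paper: your lift $\tilde g$, defined by $\tilde g(w) = (g(w^c))^c$ on $V_2$, is exactly the paper's $\alpha g \alpha$, the edge-translation $v\cap w=\varnothing \Leftrightarrow v\subset w^c$ is the same key observation, and the conclusion via Theorem 3.6 (with $\tilde g(V_1)=V_1$ ruling out the $\alpha$ factor) matches the paper's argument. No substantive difference.
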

\begin{proof}
Let $g$ be an automorphism of the graph $\Gamma$. We now consider the bipartite Kneser graph $ \Gamma_1=H(n,k)=( V,E) $,  with partition $V=V_1 \cup V_2 $, $V_1 \cap V_2 =\varnothing$, where $ V_1= \{ v \  | \  v\subset [n], |v|=k \}$ and  $ V_2= \{ w \  |  \ w \subset [n], |w|=n-k \}$. We define the mapping $ f: V \rightarrow  V  $ by the following rule; \

 $$ f(v) =  \begin{cases}
g(v) \ \ \ \ \ \ \ \ \ \  v \in V_1  \\  (\alpha g \alpha) (v) \ \ \ \ v \in V_2\\
 \end{cases} $$ \

It is an easy task to show that $f$ is a permutation of the vertex set $V$ such that $f(V_1) = V_1 = g(V_1)$. We show that $f$ is an automorphism of the bipartite Kneser graph $\Gamma_1$. Let $\{v,w\}$ be an edge of the graph $\Gamma_1$ with $v \in V_1$. Then $v \subset w $, and hence $ v \cap w^c=v \cap  \alpha(w) = \varnothing$. In other words  $ \{v , \alpha(w) \}$  is an edge of the Kneser  graph $\Gamma$.  Now, since the mapping $g$ is an automorphism of the Kneser graph $\Gamma$,  then $ \{g(v),  \ g(\alpha (w)) \}$ is an edge of the  Kneser graph $\Gamma$, and therefore we have $ g(v)\cap g(\alpha (w)) =  \varnothing$.  This implies that $ g(v) \subset {(g(\alpha (w))) }^c=\alpha( g(\alpha (w)) )$. In other words $ \{g(v),  \alpha(g(\alpha (w)) ) \} =  \{f(v), f(w)\}$ is an edge of the bipartite Kneser graph $\Gamma_1$.
 Therefore $f$ is an automorphism of the bipartite Kneser graph $H(n,k)$. Now, since $f({V_1}) =V_1$, then by Theorem 3.6. there is a permutation $\theta$ in $Sym([n])$ such that $f= f_{ \theta}$. Then,  for every $v\in V_1$ we have $ g(v)= f(v)=f_{\theta}(v)$, and therefore $g=f_{\theta} $.
We  now can conclude that $ Aut( K(n,k))$ is a subgroup of the group $ H =\{ f_\gamma \  |\  \gamma \in Sym ([n]) \} $. On the other hand, we can see that  $H$ is a subgroup of $Aut(K(n,k))$, and therefore we have $Aut(K(n,k))=H =\{ f_\gamma  \  | \  \gamma \in Sym ([n]) \} \cong Sym ([n])$.

\end{proof} \

\section{ Conclusion}
In this paper, we     studied one of the algebraic properties of the bipartite Kneser graph $H(n,k)$. We determined
the automorphism group of this graph for all  $n,k,$ where $2k < n$
(Theorem 3.6). Then, by Theorem 3.6.  we offered a new proof for determining the automorphism group of the Kneser graph $K(n,k)$(Theorem 3.7).

\section{ Acknowledgements}
The author is thankful to the anonymous reviewers  for their valuable comments and suggestions.

\end{document}